\author[P.~Leonetti]{Paolo Leonetti}
\address{Institute of Analysis and Number Theory, Graz University of Technology,  Kopernikusgasse 24/II, 8010 Graz, Austria}
\email{leonetti.paolo@gmail.com}
\author[J.~Schwaiger]{Jens Schwaiger}
\address{Institute of Mathematics and Scientific Computing, University of Graz, $\text{ }$ Heinrichstra{\ss}e 36, 8010 Graz, Austria}
\email{jens.schwaiger@uni-graz.at}
\keywords{Pexider equation; general linear equation; existence and uniqueness of extension; open connected sets.}
\thanks{P.L. was supported by the Austrian Science Fund (FWF), project F5512-N26.}
\subjclass[2010]{Primary: 39B52, 15A06. Secondary: 39B22, 39B32.}
\title{The general linear equation on open \\ connected sets}
\newcommand{\vertiii}[1]{{\left\vert\kern-0.25ex\left\vert\kern-0.25ex\left\vert #1 
    \right\vert\kern-0.25ex\right\vert\kern-0.25ex\right\vert}}
   \def\MR#1{}
\newtheorem{theorem}{Theorem}[section]
\newtheorem{cor}[theorem]{Corollary}
\newtheorem{prop}[theorem]{Proposition}
\newtheorem{question}{Question}[section]
\theoremstyle{definition} 
\let\olddefi\defi
\renewcommand{\defi}{\olddefi\normalfont}
\newtheorem{example}[theorem]{Example}
\let\oldexample\example
\renewcommand{\example}{\oldexample\normalfont}
\let\oldrmk\rmk
\renewcommand{\rmk}{\oldrmk\normalfont}
\newtheorem{claim}{\textsc{Claim}}
\newtheorem*{claim*}{\textsc{Claim}}
\providecommand{\RRb}{\mathbb{R}}
\providecommand{\MR}[1]{}
\providecommand{\MR}{\relax\ifhmode\unskip\space\fi MR }
\providecommand{\href}[2]{#2}
\begin{document}

\maketitle
\thispagestyle{empty}

\maketitle 
\begin{abstract}
\noindent{} Fix non-zero reals $\alpha_1,\ldots,\alpha_n$ with $n\ge 2$ and let $K$ be a non-empty open connected set in a topological vector space such that $\sum_{i\le n}\alpha_iK\subseteq K$ (which holds, in particular, if $K$ is an open convex cone and $\alpha_1,\ldots,\alpha_n>0$). Let also $Y$ be a vector space over $\mathbb{F}:=\mathbb{Q}(\alpha_1,\ldots,\alpha_n)$. We show, among others, that a function $f: K\to Y$ satisfies the general linear equation
$$
\textstyle \forall x_1,\ldots,x_n \in K,\,\,\,\,\, f\left(\sum_{i\le n}\alpha_i x_i\right)=\sum_{i\le n}\alpha_i f(x_i)
$$
if and only if there exist a unique $\mathbb{F}$-linear $A:X\to Y$ and unique $b\in Y$ such that $f(x)=A(x)+b$ for all $x \in K$, with $b=0$ if $\sum_{i\le n}\alpha_i\neq 1$. 
The main tool of the proof is a general version of a result Rad\'{o} and Baker on the existence and uniqueness of extension of the solution on the classical Pexider equation.
\end{abstract}

\section{Introduction}\label{sec:intro}

Motivated by the study of certainty equivalents in the theory of decision making under uncertainty, the authors in \cite{MR3720973} solved a functional equation which, after some manipulations, led to the restricted general linear equation
\begin{equation}\label{eq:originalglmt}
\forall x,y \in \RRb^+ \times \RRb^+,\,\,\,\,\,\Phi\left(\alpha x+\beta y\right)=\alpha\,\Phi(x)+\beta\,\Phi(y),
\end{equation}
where $\alpha, \beta \in \RRb^+$ are given constants and $\Phi: \RRb^+ \times \RRb^+ \to \RRb^+$ is a continuous function such that $\lim_{x\to (0,0)}\Phi(x)=0$, with $\RRb^+:=(0,\infty)$, see \cite[Equation (6)]{MR3720973}.

The aim of this article is to provide a characterization of the solutions of general linear equations as in \eqref{eq:originalglmt}, where the variables are restricted to an open connected set of a topological vector space. The main novelty of the proof is a general version of a result Rad\'{o} and Baker  \cite{MR900703} on the existence and uniqueness of extension of the solution of the classical Pexider equation (see Theorem \ref{thm:rado} below). We refer to \cite[Chapter 13.10]{MR2467621} and \cite{MR0611544, Pal02, MR3359699} for the classical theory of general linear equations and references therein. 


Our main result follows. The proof is given in Section \ref{sec:proofjens}.
\begin{theorem}\label{thm:jensmain}
Let $X$ be a topological vector space over $\mathbb{K}$, where $\mathbb{K}$ is the field of real or complex numbers, and let $Y$ be a vector space over a field $\mathbb{F}$ of characteristic zero. Fix also non-zero $\alpha_1,\ldots,\alpha_n \in \mathbb{K}$ with $n\ge 2$, non-zero scalars $\beta_1,\ldots,\beta_n \in \mathbb{F}$, and a non-empty open connected set $K\subseteq X$ such that $\sum_{i\le n}\alpha_i K \subseteq K$. 
Finally, let $f: K \to Y$ be a function such that
\begin{equation}\label{eq:hypojens}
\textstyle \forall x_1,\ldots,x_n \in K,\,\,\,\,\,\,\,\,f\left(\sum_{i\le n}\alpha_ix_i\right)=\sum_{i\le n}\beta_i f(x_i).
\end{equation}

Then there exist a unique group homomorphism $A: X\to Y$ and a unique $b \in Y$ for which
\begin{equation}\label{eq:homogeneityA}
\forall x \in X, \forall i=1,\ldots,n, \,\,\,\,\,\,\,\,\,\,A(\alpha_i x)=\beta_i A(x)
\end{equation}
and
\begin{equation}\label{eq:claimjens}
\forall x \in K,\,\,\,\,\,\,\,\,\,\,f(x)=A(x)+b,
\end{equation}
where necessarily $b=0$ if $\sum_{i\le n}\beta_i \neq 1$ \textup{(}and $b \in Y$ arbitrary otherwise\textup{)}.

Conversely, if $A: X\to Y$ is a group homomorphism which satisfies \eqref{eq:homogeneityA}, and $f: K\to Y$ is defined by \eqref{eq:claimjens} with $b=0$ if $\sum_{i\le n}\beta_i \neq 1$, then $f$ satisfies \eqref{eq:hypojens}.
\end{theorem}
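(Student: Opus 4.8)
The plan is to recast \eqref{eq:hypojens} as a Pexider equation on a restricted domain and invoke the extension result Theorem~\ref{thm:rado}. For the (easy) converse direction, given a group homomorphism $A$ satisfying \eqref{eq:homogeneityA} and $f=A+b$ as in \eqref{eq:claimjens}, I would simply compute
$$
\textstyle f\!\left(\sum_{i\le n}\alpha_i x_i\right)=A\!\left(\sum_{i\le n}\alpha_i x_i\right)+b=\sum_{i\le n}A(\alpha_i x_i)+b=\sum_{i\le n}\beta_i A(x_i)+b
$$
and compare with $\sum_{i\le n}\beta_i f(x_i)=\sum_{i\le n}\beta_i A(x_i)+\bigl(\sum_{i\le n}\beta_i\bigr)b$; the two agree exactly when $\bigl(\sum_{i\le n}\beta_i-1\bigr)b=0$, which is guaranteed by the stipulation that $b=0$ unless $\sum_{i\le n}\beta_i=1$.

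For the forward direction I substitute $y_i:=\alpha_i x_i$, so that $x_i\in K$ if and only if $y_i\in\alpha_i K$, and define $g_i\colon \alpha_i K\to Y$ by $g_i(y):=\beta_i f(y/\alpha_i)$. Because $\sum_{i\le n}\alpha_i K\subseteq K$, equation \eqref{eq:hypojens} becomes the Pexider equation $f\bigl(\sum_{i\le n} y_i\bigr)=\sum_{i\le n} g_i(y_i)$, valid for all $y_i\in\alpha_i K$ with $\sum_{i\le n} y_i\in K$. This is precisely the shape handled by Theorem~\ref{thm:rado}, which I would apply to obtain unique extensions $\hat f,\hat g_1,\dots,\hat g_n\colon X\to Y$ satisfying $\hat f\bigl(\sum_{i\le n} y_i\bigr)=\sum_{i\le n} \hat g_i(y_i)$ for all $y_1,\dots,y_n\in X$, with $\hat f|_K=f$ and $\hat g_i|_{\alpha_iK}=g_i$.

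From the global Pexider equation I extract the decomposition in the standard way: writing $c_i:=\hat g_i(0)$ and $A:=\hat f-\hat f(0)$, setting all but one variable equal to $0$ gives $\hat g_i=A+c_i$, and substituting back yields $A\bigl(\sum_{i\le n} y_i\bigr)=\sum_{i\le n} A(y_i)$ with $A(0)=0$, so that $A$ is a group homomorphism and $\hat f=A+b$ with $b:=\hat f(0)=\sum_{i\le n} c_i$. Restricting to $K$ gives \eqref{eq:claimjens}. To obtain \eqref{eq:homogeneityA} I compare $\hat g_i|_{\alpha_i K}=g_i$: for $x\in K$ this reads $\beta_i(A(x)+b)=A(\alpha_i x)+c_i$, so the map $h_i(x):=A(\alpha_i x)-\beta_i A(x)$ is constant ($=\beta_i b-c_i$) on $K$. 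Since $x\mapsto A(\alpha_i x)$ is additive, $h_i$ is a group homomorphism; being constant on the open set $K$ it vanishes on the neighborhood $K-K$ of $0$, and a group homomorphism on $X$ vanishing near $0$ vanishes identically (for any $x$, $\tfrac1m x\in K-K$ for large $m$ by continuity of scalar multiplication, and $h_i(x)=m\,h_i(\tfrac1m x)=0$ since $h_i$ is additive). Hence $h_i\equiv 0$, which is \eqref{eq:homogeneityA}, and moreover $c_i=\beta_i b$; summing, $b=\sum_{i\le n} c_i=\bigl(\sum_{i\le n}\beta_i\bigr)b$, so $\bigl(\sum_{i\le n}\beta_i-1\bigr)b=0$ forces $b=0$ whenever $\sum_{i\le n}\beta_i\neq1$.

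Uniqueness follows from the same vanishing principle: if $A+b=A'+b'$ on $K$ with $A,A'$ group homomorphisms, then $A-A'$ is a homomorphism constant on $K$, hence identically $0$, whence $A=A'$ and $b=b'$. I expect the main obstacle to be the first step, namely verifying that the restricted-domain Pexider system for $(f,g_1,\dots,g_n)$ genuinely meets the hypotheses of Theorem~\ref{thm:rado}, in particular the openness and connectedness and the algebraic compatibility of the differing domains $K$ and $\alpha_i K$; once the simultaneous extension $\hat f,\hat g_i$ is secured, the remaining algebra is routine.
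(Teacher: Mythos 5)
Your proposal is correct and takes essentially the same route as the paper: reduce \eqref{eq:hypojens} to a Pexider equation on $U=\prod_{i\le n}\alpha_i K$ via $g_i(y):=\beta_i f(y/\alpha_i)$, invoke Theorem~\ref{thm:rado}, and upgrade the resulting affine relations using the fact that an additive map which is constant on a non-empty open set vanishes identically (by absorption). The one point to tighten --- which the paper's own write-up glosses over in the same way --- is that Theorem~\ref{thm:rado} only guarantees $\hat f=f$ on $U_+=\sum_{i\le n}\alpha_i K$, which may be a proper subset of $K$; the repair is to derive \eqref{eq:homogeneityA} and $c_i=\beta_i b$ from $f=A+b$ on $U_+$ combined with the identity $\beta_i f(x)=\hat g_i(\alpha_i x)=A(\alpha_i x)+c_i$, valid for all $x\in K$, after which $f=A+b$ on all of $K$ follows.
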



Related results can found, e.g., in \cite[Section 4]{MR0611544}. 
We remark that, in the case $X=K=\mathbb{R}^k$, $Y=\mathbb{R}$, and $n=2$, it is possible to characterize all group homomorphisms $A:X\to Y$ satisfying \eqref{eq:homogeneityA}, see e.g. \cite[Theorem 13.10.5]{MR2467621}. 

In the rather early paper \cite{MR0326217} we can find an extension theorem for the classical Cauchy equation restricted to an arbitrary non-empty open, not necessarily connected, subset of $\mathbb{R}^2$. Other investigations by the same authors related to exponential polynomials and spectral analysis can be found in \cite{Sze82, Sze89, Sze91}.



In some cases, the hypotheses of Theorem \ref{thm:jensmain} are sufficiently easy to check. In this regard, 
given a real topological vector space $X$, a set $K\subseteq X$ is said to be a \emph{convex cone} 
if $K+K\subseteq K$ and $\alpha K \subseteq K$ 
for all real $\alpha>0$. Note that a convex cone $K$ is connected and that $\alpha K=K+K=K$ for all $\alpha>0$. Therefore:
\begin{cor}\label{cor:cones}
Theorem \ref{thm:jensmain} holds if $K$ is a non-empty open convex cone and $\alpha_1,\ldots,\alpha_n>0$, provided that $\mathbb{K}=\mathbb{R}$.
\end{cor}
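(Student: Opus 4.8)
The plan is to show that the hypotheses of Theorem \ref{thm:jensmain} are automatically satisfied in this more restrictive setting, so that the corollary follows by a direct appeal to that theorem. Since $\mathbb{K}=\mathbb{R}$ and each $\alpha_i>0$, the scalars $\alpha_1,\ldots,\alpha_n$ are non-zero reals with $n\ge 2$, and the data $Y$, $\mathbb{F}$, $\beta_1,\ldots,\beta_n$ are exactly as required there. The only two conditions that are not immediate are that $K$ is connected and that $\sum_{i\le n}\alpha_i K\subseteq K$; I would verify both using only the two defining closure properties of a convex cone, namely $K+K\subseteq K$ and $\alpha K\subseteq K$ for real $\alpha>0$.

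First I would check that an open convex cone $K$ is \emph{convex}, hence connected. Indeed, for $x,y\in K$ and $t\in(0,1)$ one has $tx\in K$ and $(1-t)y\in K$ by the cone property $\alpha K\subseteq K$ (applied with $\alpha=t$ and $\alpha=1-t$), and therefore $tx+(1-t)y\in K+K\subseteq K$; the cases $t\in\{0,1\}$ are trivial. Since line segments are continuous images of $[0,1]$, a convex subset of a topological vector space is connected, which supplies the connectedness demanded by the theorem.

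Next I would record the two scaling identities quoted just before the statement. For $\alpha>0$ the inclusion $\alpha K\subseteq K$ is part of the definition, while $K=\alpha(\alpha^{-1}K)\subseteq \alpha K$ follows by applying the cone property with the positive scalar $\alpha^{-1}$; hence $\alpha K=K$. Similarly $K+K\subseteq K$ is the additive closure, and $K\subseteq K+K$ because every $x\in K$ can be written as $x=\tfrac12 x+\tfrac12 x$ with $\tfrac12 x\in K$; hence $K+K=K$, and by an immediate induction the $n$-fold Minkowski sum $K+\cdots+K$ equals $K$.

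Combining these, $\sum_{i\le n}\alpha_i K=\sum_{i\le n}K=K+\cdots+K=K\subseteq K$, so the invariance hypothesis of Theorem \ref{thm:jensmain} holds (indeed with equality). With all hypotheses verified, Theorem \ref{thm:jensmain} applies verbatim and yields the conclusion. There is essentially no obstacle here: the whole content is the routine reduction to the positivity-based closure properties of a convex cone, and the only point that demands any care is to obtain the \emph{equalities} $\alpha_i K=K$ and $K+\cdots+K=K$ rather than mere inclusions, since it is these equalities that collapse the $n$-fold sum back to $K$.
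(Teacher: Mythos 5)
Your proposal is correct and follows essentially the same route as the paper, which justifies the corollary by the observation (stated just before it) that a convex cone is connected and satisfies $\alpha K = K+K=K$ for all $\alpha>0$, whence $\sum_{i\le n}\alpha_i K = K\subseteq K$ and Theorem \ref{thm:jensmain} applies. You merely spell out the details of these equalities and of the connectedness, which the paper leaves implicit.
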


As another consequence, we have the following.
\begin{cor}\label{thm:jensmain2222}
With the same hypothesis of Theorem \ref{thm:jensmain}, let us suppose that $\mathbb{K}=\mathbb{R}$, $\mathbb{F}:=\mathbb{Q}(\alpha_1,\ldots,\alpha_n)$, and $\alpha_i=\beta_i$ for all $i=1,\ldots,n$. 

Then a function $f: K \to Y$ satisfies
\begin{equation}\label{eq:hypojens2222}
\textstyle \forall x_1,\ldots,x_n \in K,\,\,\,\,\,\,\,\,f\left(\sum_{i\le n}\alpha_ix_i\right)=\sum_{i\le n}\alpha_i f(x_i).
\end{equation}
if and only if there exist a $\mathbb{F}$-linear $A: X\to Y$ and $b \in Y$ such that $f(x)=A(x)+b$ for all $ x \in K$, with $b=0$ if $\sum_{i\le n}\alpha_i\neq 1$.
\end{cor}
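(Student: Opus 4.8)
The plan is to deduce the statement directly from Theorem~\ref{thm:jensmain}, applied with the choice $\beta_i=\alpha_i$ for each $i$. Since $\alpha_1,\ldots,\alpha_n$ are nonzero reals and $\mathbb{K}=\mathbb{R}$, the field $\mathbb{F}=\mathbb{Q}(\alpha_1,\ldots,\alpha_n)$ is a subfield of $\mathbb{R}$; restricting scalars, $X$ becomes an $\mathbb{F}$-vector space, so that $\mathbb{F}$-linearity of a map $A\colon X\to Y$ is meaningful. With this in hand, the only point that requires an argument is that, for a group homomorphism $A\colon X\to Y$, condition \eqref{eq:homogeneityA} (now reading $A(\alpha_i x)=\alpha_i A(x)$) is equivalent to $A$ being $\mathbb{F}$-linear. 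Granting this equivalence, the two implications of the corollary follow verbatim from the two halves of Theorem~\ref{thm:jensmain}, upon noting that an $\mathbb{F}$-linear $A$ is in particular additive and satisfies \eqref{eq:homogeneityA}, while the normalization $b=0$ when $\sum_{i\le n}\alpha_i\neq 1$ is inherited directly.

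To prove the equivalence, I would introduce the set
\[
S:=\{\lambda\in\mathbb{F}: A(\lambda x)=\lambda A(x)\ \text{for all } x\in X\}
\]
and show that it is a subfield of $\mathbb{F}$. Additivity of $A$ gives closure under addition and under additive inverses, together with $0,1\in S$; closure under products follows by writing $A((\lambda\mu)x)=A(\lambda(\mu x))=\lambda A(\mu x)=\lambda\mu A(x)$; and closure under multiplicative inverses follows by evaluating the defining identity of $\lambda\in S\setminus\{0\}$ at the point $\lambda^{-1}x$ and dividing by $\lambda$ in the $\mathbb{F}$-vector space $Y$. Since $A$ is a group homomorphism it is automatically $\mathbb{Q}$-homogeneous (as $\operatorname{char}\mathbb{F}=0$), so $\mathbb{Q}\subseteq S$; and \eqref{eq:homogeneityA} says exactly that $\alpha_i\in S$ for every $i$. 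Hence $S$ is a subfield of $\mathbb{F}$ containing $\mathbb{Q}$ and all the $\alpha_i$, and, as $\mathbb{F}=\mathbb{Q}(\alpha_1,\ldots,\alpha_n)$ is the smallest such field, we conclude $S=\mathbb{F}$; that is, $A$ is $\mathbb{F}$-linear. The converse inclusion is immediate, since each $\alpha_i\in\mathbb{F}$.

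The argument is essentially formal once the equivalence is set up, and I expect no genuine obstacle: the corollary is a specialization of Theorem~\ref{thm:jensmain}, and the substantive work has already been carried out there. The only mild subtleties are keeping track of which scalar multiplication—that of the $\mathbb{R}$-space $X$ or that of the $\mathbb{F}$-space $Y$—is invoked at each step, and recording that $\mathbb{Q}$-homogeneity of additive maps is available precisely because the characteristic is zero.
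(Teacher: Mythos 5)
Your proposal is correct and follows essentially the same route as the paper: the paper also reduces everything to Theorem~\ref{thm:jensmain} and then upgrades the $\alpha_i$-homogeneity of $A$ to homogeneity over all of $\mathbb{Q}(\alpha_1,\ldots,\alpha_n)$ by passing through polynomials and then rational functions in the $\alpha_i$, which is exactly what your subfield $S$ encodes. Your write-up merely makes explicit the ``straightforward argument'' the paper leaves to the reader, including the correct bookkeeping of the two scalar multiplications.
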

\begin{proof}
Thanks to Theorem \ref{thm:jensmain}, we just need to show that if $f$ satisfies \eqref{eq:hypojens2222} then $A$ is $\mathbb{F}$-linear. We obtained that $A$ is $\alpha_i$-homogeneous for all $i=1,\ldots,n$, i.e., $A(\alpha_i x)=\alpha_i A(x)$ for all $x \in X$. By a straightforward argument, $A$ is $p(\alpha_1,\ldots,\alpha_n)$-homogeneous, for each polynomial $p \in \mathbb{Q}[X_1,\ldots,X_n]$, hence also to the corresponding field of rational functions. 
\end{proof}


In particular, 
we obtain all the solutions of Equation \eqref{eq:originalglmt} (we omit details).
\begin{cor}\label{cor:glmt}
Set $K:=\RRb^+\times \RRb^+$ and fix $\alpha,\beta \in \RRb^+$. Then a continuous function $\Phi: K \to \RRb^+$ such that $\lim_{x\to (0,0)}\Phi(x)=0$ satisfies Equation \eqref{eq:originalglmt} if and only if there exists $y \in \RRb^2$ such that $\Phi(x)=\langle x, y\rangle$ for all $x \in K$.
%
%
\end{cor}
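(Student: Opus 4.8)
The plan is to specialize Corollary~\ref{thm:jensmain2222} to the present situation and then use the two regularity assumptions—continuity of $\Phi$ and the boundary condition $\lim_{x\to(0,0)}\Phi(x)=0$—to pin down the exact form of the solution. First I would observe that $X:=\mathbb{R}^2$, that $K=\RRb^+\times\RRb^+$ is a non-empty open convex cone in $X$, and that $\alpha,\beta>0$; moreover Equation~\eqref{eq:originalglmt} is precisely \eqref{eq:hypojens2222} with $n=2$, $\alpha_1=\alpha$, and $\alpha_2=\beta$. Hence, by Corollary~\ref{cor:cones} (which verifies the hypotheses of Theorem~\ref{thm:jensmain}) together with Corollary~\ref{thm:jensmain2222} applied with $\mathbb{F}:=\mathbb{Q}(\alpha,\beta)$ and $Y:=\mathbb{R}$, the function $\Phi$ satisfies \eqref{eq:originalglmt} if and only if there are an $\mathbb{F}$-linear map $A:\mathbb{R}^2\to\mathbb{R}$ and a constant $b\in\mathbb{R}$ with $\Phi(x)=A(x)+b$ for every $x\in K$ (and $b=0$ whenever $\alpha+\beta\neq 1$).

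The main step is to upgrade the merely $\mathbb{F}$-linear map $A$ to an $\mathbb{R}$-linear one by exploiting continuity, and this is the point I expect to require the most care. Since $\Phi$ is continuous on the open set $K$ and $A=\Phi-b$ there, the additive map $A$ is continuous at some (indeed every) point of $K$. By the classical regularity theory for the Cauchy equation, an additive function $\mathbb{R}^2\to\mathbb{R}$ that is continuous at a single point is continuous on all of $\mathbb{R}^2$, and a continuous additive map is $\mathbb{R}$-linear: restricting to each coordinate line, $t\mapsto A(t e_i)$ is additive and continuous, hence $A(t e_i)=t A(e_i)$, and additivity then yields $A(x)=x_1 A(e_1)+x_2 A(e_2)$. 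Setting $y:=(A(e_1),A(e_2))\in\mathbb{R}^2$ gives $A(x)=\langle x,y\rangle$.

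It then remains to eliminate $b$. Knowing that $A$ is $\mathbb{R}$-linear, $\langle x,y\rangle\to 0$ as $x\to(0,0)$, so $\lim_{x\to(0,0)}\Phi(x)=\lim_{x\to(0,0)}\bigl(\langle x,y\rangle+b\bigr)=b$; the boundary hypothesis forces $b=0$, and therefore $\Phi(x)=\langle x,y\rangle$ on $K$. For the converse, given $y\in\mathbb{R}^2$ such that $x\mapsto\langle x,y\rangle$ maps $K$ into $\RRb^+$, bilinearity of the inner product shows at once that $\Phi(x)=\langle x,y\rangle$ satisfies \eqref{eq:originalglmt}, while its continuity and the vanishing of the limit at the origin are immediate; this direction needs no appeal to the preceding results.
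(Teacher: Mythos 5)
The proposal is correct and follows exactly the route the paper intends (the paper itself omits the details): specialize Corollary \ref{thm:jensmain2222} via Corollary \ref{cor:cones}, use continuity of $\Phi$ plus the classical regularity theory of additive functions to upgrade the $\mathbb{F}$-linear $A$ to an $\mathbb{R}$-linear map $\langle \cdot,y\rangle$, and use the limit condition at the origin to force $b=0$. All steps, including the direct verification of the converse, are sound.
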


Lastly, thanks to Theorem \ref{thm:jensmain}, we recover the characterization of the solutions of Jensen-like equations (again, we omit details). This may be used to shorten the proof of the main result in \cite{MR3542948}, cf. Equation (11) and (12) therein. 
\begin{cor}\label{cor:matkowskioriginal}
Fix positive reals $\alpha_1,\ldots,\alpha_n$ with $n\ge 2$ and $\sum_{i\le n}\alpha_i= 1$. 
Then a continuous function $\Phi: [0,1]^k\to \mathbb{R}^h$ satisfies Equation \eqref{eq:hypojens2222} if and only if there exist a $h\times k$ real matrix $A$ and $b \in \mathbb{R}^h$ such that $\Phi(x)=Ax+b$ for all $x \in [0,1]^k$.
\end{cor}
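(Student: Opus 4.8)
The plan is to reduce everything to Corollary~\ref{thm:jensmain2222} applied on the interior of the cube, and then to recover both the matrix representation and the boundary values from continuity. The one genuine obstacle is that $[0,1]^k$ is \emph{closed}, so Theorem~\ref{thm:jensmain} does not apply directly; the passage to the interior and back is what does the real work.

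First I would observe that the open cube $U:=(0,1)^k$ is a non-empty open connected subset of $X=\mathbb{R}^k$ satisfying $\sum_{i\le n}\alpha_i U\subseteq U$: indeed, since $\alpha_i>0$ and $\sum_{i\le n}\alpha_i=1$, the combination $\sum_{i\le n}\alpha_i x_i$ is a strict convex combination of points of $U$ and hence lies in $U$. Since $U\subseteq[0,1]^k$, the restriction $\Phi|_U$ still satisfies Equation~\eqref{eq:hypojens2222}, so Corollary~\ref{thm:jensmain2222} (with $\mathbb{K}=\mathbb{R}$, $Y=\mathbb{R}^h$, and $\mathbb{F}=\mathbb{Q}(\alpha_1,\ldots,\alpha_n)$) provides an $\mathbb{F}$-linear map $A:\mathbb{R}^k\to\mathbb{R}^h$ and a vector $b\in\mathbb{R}^h$ with $\Phi(x)=A(x)+b$ for all $x\in U$.

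Next I would upgrade the $\mathbb{F}$-linearity of $A$ to genuine $\mathbb{R}$-linearity, which is the point where continuity enters. Since $\Phi$ is continuous on $U$ and $A(x)=\Phi(x)-b$ there, the additive map $A$ is continuous on the non-empty open set $U$; by the standard regularity theory for the Cauchy equation, an additive function on $\mathbb{R}^k$ that is continuous on some open set is continuous at the origin, hence continuous everywhere, hence $\mathbb{R}$-linear, and is therefore represented by an $h\times k$ real matrix, which I again denote by $A$. Finally, since $U$ is dense in $[0,1]^k$ while both $\Phi$ and $x\mapsto Ax+b$ are continuous on $[0,1]^k$ and agree on $U$, they agree on the whole closure, giving $\Phi(x)=Ax+b$ for every $x\in[0,1]^k$.

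For the converse, a direct computation suffices: if $\Phi(x)=Ax+b$, then $\Phi(\sum_{i\le n}\alpha_i x_i)=A(\sum_{i\le n}\alpha_i x_i)+b=\sum_{i\le n}\alpha_i Ax_i+b$, whereas $\sum_{i\le n}\alpha_i\Phi(x_i)=\sum_{i\le n}\alpha_i Ax_i+(\sum_{i\le n}\alpha_i)\,b$, and the two coincide precisely because $\sum_{i\le n}\alpha_i=1$; continuity of such $\Phi$ is clear. I expect the boundary extension and the continuity upgrade of $A$ to be the only non-formal points, both handled respectively by density and by the elementary regularity theory of additive maps.
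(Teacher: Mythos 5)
Your proposal is correct and follows exactly the route the paper intends: the paper states this corollary as a consequence of Theorem~\ref{thm:jensmain} (via Corollary~\ref{thm:jensmain2222}) and explicitly omits the details, which are precisely the three steps you supply — passing to the open cube $(0,1)^k$ where $\sum_{i\le n}\alpha_i U\subseteq U$ holds, upgrading the additive map to $\mathbb{R}$-linearity by continuity on an open set, and recovering the boundary values by density. Both directions are handled correctly, so there is nothing to add.
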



As we anticipated before, 
the main novelty in the proof of Theorem \ref{thm:jensmain} is the use of (a natural extension of) a result of Rad\'{o} and Baker \cite{MR900703} on the existence and uniqueness of extension of the solution of the classical Pedixer's equation. 

To this aim, we need to fix some notation. Given non-empty sets $A,B$ and $n \in \mathbb{N}^+$ with $A\subseteq B^n$, denote by $\pi_i$ the $i$-th projection, that is, $\pi_i(a):=a_i$ for all $a=(a_1,\ldots,a_n) \in A$, and define
$$
A_i:=\pi_i(A)=\{b \in B:  \pi_i(a)=b\,\text{ for some } a \in A\}
$$
for each $i=1,\ldots,n$. Finally, set 
$$
A_+:=\{a_1+\cdots+a_n: (a_1,\ldots,a_n) \in A\}.
$$

\begin{theorem}\label{thm:rado}
\textbf{\textup{(}Rad\'{o} and Baker's extension theorem.\textup{)}} Let $X$ be a topological vector space over $\mathbb{K}$, where $\mathbb{K}$ is the field of real or complex numbers, and $Y$ be an abelian group. Moreover, given $n\ge 2$, let $U\subseteq X^n$ be a non-empty open connected set and fix functions $f: U_+ \to Y$, $g_1: U_1\to Y, \ldots, g_n: U_n \to Y$ such that
\begin{equation}\label{eq:pexidertwo}
\textstyle \forall \,(x_1,\ldots,x_n) \in U,\,\,\,\,\,\,\,f\left(\sum_{i\le n}x_i\right)=\sum_{i\le n}g_i(x_i).
\end{equation}
Then there exists a unique extension $(F,G_1,\ldots,G_n)$ of $(f,g_1,\ldots,g_n)$ for which
$$
\textstyle \forall\, x_1,\ldots,x_n \in X,\,\,\,\,\,\,\,F\left(\sum_{i\le n}x_i\right)=\sum_{i\le n}G_i(x_i).
$$
In fact, there exist a unique group homomorphism $A: X \to Y$ and unique $u,u_1,\ldots,u_n \in Y$ such that $u=\sum_{i\le n}u_i$ and 
$$
\forall x \in X,\forall i=1,\ldots,n,\,\,\,\,\,\,\,F(x)=A(x)+u\,\,\text{ and }\,\, \,G_i(x)=A(x)+u_i.
$$
\end{theorem}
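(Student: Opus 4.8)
The plan is to prove Theorem~\ref{thm:rado} (Rad\'o and Baker's extension theorem) by first establishing the extension on the connected open set $U$ using the classical Rad\'o--Baker result, and then leveraging the functional equation itself to propagate this local extension to all of $X$. The starting point is to reduce to the known theorem: the hypothesis is exactly the Pexider equation restricted to an open connected $U\subseteq X^n$, so the existing result of Rad\'o and Baker \cite{MR900703} should yield functions $F,G_1,\ldots,G_n$ on $X$ satisfying the global Pexider equation, with the stated additive structure. The crux, therefore, is to show that the local solution determines a \emph{unique} global extension, and that the extension has the claimed form $F(x)=A(x)+u$, $G_i(x)=A(x)+u_i$ with $u=\sum_{i\le n}u_i$.

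The key steps I would carry out, in order, are as follows. First, I would argue that the values of $F$ on $X$ are forced by the equation: picking a reference point $a=(a_1,\ldots,a_n)\in U$ and exploiting that $U$ is open, small perturbations $a_i\mapsto a_i+t_i$ with $\sum_i t_i$ fixed stay inside $U$, which pins down how the $G_i$ must differ additively on overlapping domains. This is the standard mechanism by which a Pexider equation collapses to a single Cauchy-type additive map plus constants. Second, I would set $A(x):=G_1(x)-G_1(0)$ (or an analogous symmetric expression) and verify, using the global Pexider equation, that $A$ is a group homomorphism, i.e. $A(x+y)=A(x)+A(y)$; the constants $u_i:=G_i(0)$ and $u:=F(0)$ then satisfy $u=\sum_{i\le n}u_i$ directly by substituting $x_1=\cdots=x_n=0$ into the equation. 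Third, I would check that all the $G_i$ share the \emph{same} additive part $A$: substituting $x_j=0$ for $j\neq i$ isolates $G_i(x)-G_i(0)$ and shows it equals $A(x)$ regardless of $i$.

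The main obstacle I anticipate is establishing uniqueness of the extension rigorously, together with handling the fact that $U$ is a connected subset of the product $X^n$ rather than a product of sets in $X$. Connectedness is essential here: it guarantees that the ``local'' additive increments agree consistently as one moves around $U$, so that the extension is globally well-defined and single-valued. I would address this by a connectedness/analytic-continuation argument, showing that the set of points where two candidate extensions agree is both open and closed in $U$ and nonempty, hence all of $U$; since any global extension is determined by its restriction to $U$ through the equation, uniqueness on $U$ forces uniqueness on $X$. A secondary technical point is verifying that $U_+$ and the $U_i$ are large enough (as images under continuous open-like maps of an open connected set) for the substitution arguments to make sense, but this follows from the openness and connectedness of $U$ together with the continuity of the sum and projection maps.
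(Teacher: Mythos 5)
Your proposal has a genuine gap at its foundation: you invoke ``the existing result of Rad\'o and Baker'' to produce the global extension $(F,G_1,\ldots,G_n)$, but that result \cite{MR900703} covers only the case $n=2$, and the statement you are asked to prove \emph{is} the general-$n$ extension theorem. Once you assume the global extension exists, everything else in your plan (setting $A(x):=G_1(x)-G_1(0)$, substituting $x_1=\cdots=x_n=0$ to get $u=\sum_{i\le n}u_i$, isolating $G_i(x)-G_i(0)$ by putting $x_j=0$ for $j\neq i$) is just the easy structure theorem for globally defined Pexider solutions, namely Proposition~\ref{thm:simple}; moreover, none of these substitutions is even available before the extension is built, since $(0,\ldots,0)$ need not lie in $U$ and $0$ need not lie in any $U_i$. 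The entire difficulty of the theorem is the \emph{construction} of the extension, and your proposal does not carry it out.

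The paper's proof does this in three steps that your sketch only gestures at. First, at each $x\in U$ one finds a product neighborhood $x+V_x^n\subseteq U$ and passes to shifted functions vanishing at $0$; these satisfy a Pexider equation on $V_x^n$, and the argument of \cite[Theorem 1]{MR900703} produces a group homomorphism $A_x:X\to Y$ extending the local additive part --- this step needs the nontrivial fact that an additive map defined on a neighborhood $V$ of $0$ in a topological vector space extends uniquely to all of $X$ because $\bigcup_{n\ge 1}nV=X$. Second, one shows $A_x=A_y$ whenever $x+V_x^n$ and $y+V_y^n$ overlap, with matching constants. Third, connectedness of $U$ enters through a chain lemma: any two points of $U$ are joined by finitely many pairwise-overlapping members of the cover $\{x+V_x^n\}$, which glues the local data into a single $A$ and constants $u,u_1,\ldots,u_n$. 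Your open-and-closed argument addresses only the \emph{uniqueness} of two candidate extensions, not their existence; to repair the proposal you would need to supply the local extension step and the gluing argument explicitly.
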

The proof for the case $n=2$ can be found in \cite[Theorem 1]{MR900703}, cf. also \cite[Theorem 4, p.80]{MR875412}. Moreover, see \cite[Theorem 5]{MR1130842} and \cite{MR2161144, ChuTab08, MR3648479} for related results. The proof of Theorem \ref{thm:rado} follows in Section \ref{sec:proof}.

Lastly, if the Pexider equation \eqref{eq:pexidertwo} holds for all $x_1,\ldots,x_n\in X$, then its analogue holds in a more general context; see \cite[Proposition 1]{GKC2019} for a related result.
\begin{prop}\label{thm:simple}
Let $X,Y$ be abelian groups, written additively, and fix functions $f, g_1,\ldots,g_n: X\to Y$, with $n\ge 2$. Then
\begin{equation}\label{eq:mainfunctional}
\textstyle \forall x_1,\ldots,x_n \in X,\,\,\,\,\,\,f\left(\sum_{i\le n}x_i\right)=\sum_{i\le n}g_i(x_i)
\end{equation}
if and only if there exist 
a homomorphism $A: X\to Y$ and $y,y_1,\ldots y_n \in Y$ with $y=\sum_{i\le n}y_i$ such that 
$$
\forall x\in X, \forall i=1,\ldots,n,\,\,\,\,\,f(x)=A(x)+y\,\,\,\,\text{ and }\,\,\,\,g_i(x)=A(x)+y_i.
$$
\end{prop}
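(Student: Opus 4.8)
The plan is to establish the nontrivial forward implication, since the converse follows by direct substitution: if $f(x)=A(x)+y$ and $g_i(x)=A(x)+y_i$ with $A$ a homomorphism and $y=\sum_{i\le n}y_i$, then $f(\sum_i x_i)=A(\sum_i x_i)+y=\sum_i A(x_i)+\sum_i y_i=\sum_i g_i(x_i)$, which is exactly \eqref{eq:mainfunctional}.

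For the forward direction, first I would record the values at the identity by setting $u_i:=g_i(0)$ for each $i$ and substituting $x_1=\cdots=x_n=0$ into \eqref{eq:mainfunctional}, obtaining $f(0)=\sum_{i\le n}u_i$. Next, fixing an index $i$ and putting $x_j=0$ for all $j\ne i$ while leaving $x_i=x$ arbitrary, the equation yields $f(x)=g_i(x)+\sum_{j\ne i}u_j$, that is
\begin{equation*}
g_i(x)=f(x)-\sum_{j\ne i}u_j \qquad (x\in X,\ 1\le i\le n).
\end{equation*}
This already shows that each $g_i$ is determined by $f$ up to an additive constant.

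Substituting these expressions back into \eqref{eq:mainfunctional}, and using that each $u_j$ occurs in exactly $n-1$ of the inner sums, I would obtain
\begin{equation*}
f\!\left(\sum_{i\le n}x_i\right)=\sum_{i\le n}f(x_i)-(n-1)\sum_{j\le n}u_j=\sum_{i\le n}f(x_i)-(n-1)f(0),
\end{equation*}
where the last equality uses $f(0)=\sum_{j}u_j$. Defining $A(x):=f(x)-f(0)$, so that $A(0)=0$, this rearranges to the $n$-variable Cauchy equation $A(\sum_{i}x_i)=\sum_{i}A(x_i)$, valid for all $x_1,\dots,x_n\in X$.

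Finally, specializing to $x_1=x$, $x_2=y$, $x_3=\cdots=x_n=0$ and invoking $A(0)=0$ collapses this to $A(x+y)=A(x)+A(y)$, so $A$ is a group homomorphism. Setting $y:=f(0)$ and $y_i:=u_i$ then gives $f(x)=A(x)+y$ and, from the displayed formula for $g_i$, $g_i(x)=A(x)+f(0)-\sum_{j\ne i}u_j=A(x)+u_i=A(x)+y_i$, with $y=\sum_{i\le n}y_i$ as required. I do not expect a genuine obstacle: the argument is purely algebraic, needing neither topology nor the field structure. The only point demanding care is the bookkeeping of the constants $u_i$, namely verifying the multiplicity $n-1$ and ensuring $A(0)=0$, so that the passage from the $n$-ary to the binary Cauchy equation is clean.
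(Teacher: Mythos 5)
Your proof is correct and follows essentially the same route as the paper: evaluate at $0$ to pin down the constants $g_i(0)$, use the substitution $x_j=0$ for $j\neq i$ to show each $g_i$ differs from $f$ by a constant, reduce to the $n$-ary Cauchy equation for $f(x)-f(0)$, and collapse it to the binary one by setting the remaining variables to zero. The only difference is cosmetic (you substitute the expressions for $g_i$ back into the equation rather than first passing to the shifted functions $\tilde f,\tilde g_i$ as the paper does), and your bookkeeping of the multiplicity $n-1$ is accurate.
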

\begin{proof}
The \textsc{if} part is clear. Conversely, define $y_i:=g_i(0)$ for all $i=1,\ldots,n$, where $0$ is the identity of $X$, and note that, setting $x_1=\cdots=x_n=0$ in \eqref{eq:mainfunctional}, we obtain 
$
f(0)=y:=\sum_{i\le n} y_i.
$ 
Hence, define the functions $\tilde{f}, \tilde{g}_1,\ldots,\tilde{g}_n: X\to Y$ by $\tilde{f}(x):=f(x)-y$ and $\tilde{g}_i(x)=g_i(x)-y_i$ for all $x \in X$ and $i=1,\ldots,n$. It follows by \eqref{eq:mainfunctional} that
\begin{equation}\label{eq:mainfunctional2}
\textstyle \tilde{f}\left(\sum_{i\le n} x_i\right)=\sum_{i\le n} \tilde{g}_i(x_i)
\end{equation}
for all $x_1,\ldots,x_n \in X$, and by construction $\tilde{f}(0)=\tilde{g}_1(0)=\cdots=\tilde{g}_n(0)=0$, where the latter $0$ is the identity of $Y$. 
Given $x \in X$ and $i \in \{1,\ldots,n\}$, set $x_i=x$ and $x_j=0$ for all $j \in \{1,\ldots,n\}\setminus \{i\}$ in \eqref{eq:mainfunctional2} so that 
$
\tilde{f}(x)=\tilde{g}_i(x).
$ 
By the arbitrariness of $x$, we conclude that $\tilde{f}=\tilde{g}_i$, hence 
$
\tilde{f}\left(\sum_{i\le n} x_i\right)=\sum_{i\le n} \tilde{f}(x_i)
$ 
for all $x_1,\ldots,x_n \in X$. Setting $x_1=\cdots=x_{n-2}=0$, 
we see that $\tilde{f}$ itself is a homomorphism. Therefore $A=\tilde{f}$, so that $f(x)=A(x)+y$ and $g_i(x)=A(x)+y_i$ for all $i=1,\ldots,n$ and $x \in X$.
\end{proof}

However, the analogous statement for the functional equation 
\begin{equation}\label{eq:mainfunctionalweight}
\textstyle \forall x_1,\ldots,x_n \in X,\,\,\,\,\,\,f\left(\sum_{i\le n}\alpha_ix_i\right)=\sum_{i\le n}g_i(x_i)
\end{equation}
where $\alpha_1,\ldots,\alpha_n$ are fixed non-zero integers does not hold. Indeed, consider the following example.
\begin{example}
Set $n=2$, $X=Y=\mathbb{Z}_4$, $\alpha_1=\alpha_2=2$, and fix functions $f,g_1,g_2: X\to Y$ such that $f(0)=f(2)=0$, $f(1)=1$, and $g_1(x)=g_2(x)=0$ for all $x \in X$. Then the functional equation \eqref{eq:mainfunctionalweight} holds.  
However, if there exist a homomorphism $A:X\to Y$ and $y\in Y$ such that $f(x)=A(x)+y$ for all $x \in X$, then
$$
3=f(2)-f(1)=(A(2)+y)-(A(1)+y)=A(1),
$$
therefore $A(2)=2$, $y=f(1)-A(1)=2$, and $A(0)=f(0)-y=2$. This is impossible since we should have $A(0)=0$.
\end{example}


\section{Proof of Theorem \ref{thm:rado}}\label{sec:proof}

Fix $x=(x_1,\ldots,x_n) \in U$. Since $U$ is open, there exists a neighborhood $G_x$ of $0 \in X^n$ such that $x+G_x \subseteq U$. In particular, by the standing assumptions, we have 
$f\left(\sum_{i\le n}(x_i+y_i)\right)=\sum_{i\le n}g_i(x_i+y_i)$ for all $(y_1,\ldots,y_n) \in G_x$. 
Since each projection $\pi_i$ is an open map, it follows that $V_x:=\bigcap_{i\le n} \pi_i(G_x)$ is a non-empty open neighborhood of $0 \in X$. In particular, 
$$
\textstyle \forall \, z_1,\ldots,z_n \in V_x,\,\,\,\,f\left(\sum_{i\le n}(x_i+z_i)\right)=\sum_{i\le n}g_i(x_i+z_i).
$$
At this point, define the functions $\tilde{f}, \tilde{g}_1,\ldots,\tilde{g}_n: V_x \to Y$ by 
$$
\textstyle \tilde{f}(z):=f\left(z+\sum_{i\le n}x_i\right)-f\left(\sum_{i\le n}x_i\right)\,\,\,\text{ and }\,\,\,\tilde{g}_i(z):=g_i(x_i+z)-g_i(x_i)
$$
for each $z \in V_x$ and $i=1,\ldots,n$. Since $f\left(\sum_{i\le n}x_i\right)=\sum_{i\le n}g_i(x_i)$, it follows that
$$
\textstyle \forall \, z_1,\ldots,z_n \in V_x,\,\,\,\,\tilde{f}\left(\sum_{i\le n}z_i\right)=\sum_{i\le n}\tilde{g}_i(z_i),
$$
and, in addition, $\tilde{f}(0)=\tilde{g}_1(0)=\cdots=\tilde{g}_n(0)=0$. 
As in the proof of Theorem \ref{thm:simple}, we obtain that  
$$
\textstyle \forall z_1,z_2 \in V_x,\,\,\,\,\,\tilde{f}(z_1+z_2)=\tilde{g}_1(z_1)+\tilde{g}_2(z_2)
$$ 
and the restriction $\tilde{f}$ to $ V_x$ coincides with each of $\tilde{g}_1,\ldots,\tilde{g}_n$. 
Moreover, as in the proof of \cite[Theorem 1]{MR900703}, there exists a unique group homomorphism $A_x: X\to Y$ which extends $\tilde{f}$. 

To sum up, this implies that, for each $x \in U$,  there exist a neighborhood $V_x$ of $0 \in X$, a unique group homomorphism $A_x: X \to Y$, and unique $u_x,u_{x,1},\ldots,u_{x,n} \in Y$ with $u_x=\sum_{i\le n}u_{x,i}$ such that
$$
\textstyle \forall z \in V_x, \forall i=1,\ldots,n,\,\,\,\,g_i(x_i+z)=A_x(z)+u_{x,i}
$$
and
$$
\textstyle \forall z \in n\cdot V_x,\,\,\,\,\,f\left(z+\sum_{i\le n}x_i\right)=A_x(z)+u_x,
$$
where $n\cdot V_x:=V_x+\cdots+V_x$, where $V_x$ is repeated $n$ times.

\begin{claim}\label{connect}
Fix $x,y \in U$ such that $(x+V_x^n) \cap (y+V_y^n) \neq \emptyset$. Then $A_x=A_y$.
\end{claim}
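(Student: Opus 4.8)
The plan is to exploit the fact that, on the overlap, at least one of the functions $g_i$ admits two valid local representations: one governed by $A_x$ and one by $A_y$. First I would fix a common point $w=(w_1,\ldots,w_n) \in (x+V_x^n)\cap(y+V_y^n)$; by definition this means $w_i-x_i \in V_x$ and $w_i-y_i\in V_y$ for every $i=1,\ldots,n$. It suffices to work with a single index, say $i=1$, so the full overlap of all coordinates is more than we actually need.

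Since $V_x$ and $V_y$ are open and contain $w_1-x_1$ and $w_1-y_1$ respectively, continuity of addition yields an open neighborhood $W$ of $0\in X$ with $(w_1-x_1)+W\subseteq V_x$ and $(w_1-y_1)+W\subseteq V_y$. For each $q \in W$, the point $w_1+q$ can then be written both as $x_1+((w_1-x_1)+q)$ with $(w_1-x_1)+q \in V_x$ and as $y_1+((w_1-y_1)+q)$ with $(w_1-y_1)+q \in V_y$. Inserting these into the two local representations of $g_1$ and using that $A_x,A_y$ are group homomorphisms, I would obtain, for all $q\in W$,
\[
A_x(q)-A_y(q)=\bigl(A_y(w_1-y_1)+u_{y,1}\bigr)-\bigl(A_x(w_1-x_1)+u_{x,1}\bigr),
\]
whose right-hand side is a constant independent of $q$. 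Evaluating at $q=0$ and using $A_x(0)=A_y(0)=0$ forces this constant to vanish, so $A_x$ and $A_y$ agree on the whole neighborhood $W$.

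It remains to upgrade agreement on $W$ to agreement on all of $X$, and this is the only genuinely topological point. Given an arbitrary $x_0\in X$, continuity of scalar multiplication (equivalently, absorption of neighborhoods of the origin in a topological vector space) provides an integer $m\ge 1$ with $\tfrac1m x_0\in W$. Since group homomorphisms commute with multiplication by integers, I would then compute $A_x(x_0)=m\,A_x(\tfrac1m x_0)=m\,A_y(\tfrac1m x_0)=A_y(x_0)$, where the middle equality uses $\tfrac1m x_0 \in W$. As $x_0$ was arbitrary, $A_x=A_y$, which is the assertion.

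The main obstacle is precisely this last extension step: the local computation only yields coincidence of $A_x$ and $A_y$ on a neighborhood of $0$, and passing to all of $X$ relies essentially on the vector space structure of $X$ over $\mathbb{K}$ (so that every point becomes small after division by a large integer) together with the $\mathbb{Z}$-homogeneity automatically enjoyed by any group homomorphism. By contrast, the algebraic heart of the argument---equating the two representations of $g_1$ and cancelling the constant term---is routine once the common neighborhood $W$ has been isolated.
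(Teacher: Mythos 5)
Your proof is correct and follows essentially the same route as the paper: the overlap forces the two local representations of a single $g_i$ to agree on a neighborhood of $0$, which gives $A_x=A_y$ there, and then absorption of neighborhoods together with $\mathbb{Z}$-homogeneity of group homomorphisms extends the identity to all of $X$. The only (harmless) difference is that the paper routes the comparison through the auxiliary homomorphism $A_w$ attached to the overlap point $w\in U$, whereas you equate the $A_x$- and $A_y$-representations of $g_1$ directly.
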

\begin{proof}
For each $i=1,\ldots,n$, we get by hypothesis that $(x_i+V_x) \cap (y_i+V_y)$ is a non-empty open set in $X$, hence there exists $w_i \in X$ and a neighborhood $W_i$ of $0 \in X$ such that $w_i+W_i \subseteq (x_i+V_x) \cap (y_i+V_y)$. Set $w=(w_1,\ldots,w_n)$ and note that 
$
\textstyle Z:=V_w \cap W_1 \cap \cdots \cap W_n
$ 
is a non-empty open neighborhood of $0 \in X$. Hence
$$
w+Z^n \subseteq (x+V_x^n) \cap (y+V_y^n).
$$
For each $z \in Z$, we have $w_i+z \in x_i+V_x$ for each $i=1,\ldots,n$, so that
\begin{displaymath}
\begin{split}
A_w(z)&=g_i(w_i+z)-u_{w,i}=g_i(x_i+(w_i-x_i+z))-u_{w,i}\\
&=u_{x,i}+A_x(w_i-x_i+z)-u_{w,i}=(u_{x,i}-u_{w,i}+A_x(w_i-x_i))+A_x(z).
\end{split}
\end{displaymath}
Setting $z=0$ we obtain 
\begin{equation}\label{eq:constants}
u_{x,i}-u_{w,i}+A_x(w_i-x_i)=0,
\end{equation}
therefore $A_w(z)=A_x(z)$ for all $z \in Z$. Considering that $\bigcup_{n\ge 1}nZ=X$ by \cite[Theorem 1.15.a]{MR1157815} and both $A_x$ and $A_w$ are $\mathbb{Q}$-linear, we obtain that $A_w(z)=A_x(z)$ for all $z \in X$. Similarly $A_w(z)=A_y(z)$ for all $z \in X$, therefore $A_x=A_y$.
\end{proof}

\begin{claim}\label{intermediate}
With the same hypothesis of Claim \ref{connect}, there exist unique $u,u_1,\ldots,u_n \in Y$ with $u=\sum_{i\le n}u_i$ such that
$$
\textstyle \forall i=1,\ldots,n,\forall z \in (x_i+V_x)\cup (y_i+V_y),\,\,\,\,\,\,\,g_i(z)=A_x(z)+u_i
$$
and
$$
\textstyle \forall z \in \left(\sum_{i\le n}x_i+n\cdot V_x\right)\cup \left(\sum_{i\le n}y_i+n\cdot V_y\right) ,\,\,\,\,\,f(z)=A_x(z)+u.
$$
\end{claim}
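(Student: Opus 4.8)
The plan is to promote the local affine representations of $f$ and $g_1,\ldots,g_n$ obtained just before Claim~\ref{connect} to genuine affine forms $A_x(\,\cdot\,)+\text{const}$ on the translated neighborhoods, and then to glue the representation centered at $x$ with the one centered at $y$ by checking that their additive constants agree on the overlaps. First I would extract the pairwise overlaps from the hypothesis: if $(z_1,\ldots,z_n)\in (x+V_x^n)\cap (y+V_y^n)$, then $z_i\in (x_i+V_x)\cap (y_i+V_y)$ for every $i$, so each of these intersections is non-empty (and, summing, $\sum_{i\le n}z_i$ lies in the intersection of the two $f$-domains as well).

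Next, for $w\in x_i+V_x$ I would write $w=x_i+z$ with $z\in V_x$ and use $g_i(x_i+z)=A_x(z)+u_{x,i}$ together with the additivity of $A_x$ to rewrite $g_i(w)=A_x(w)+\left(u_{x,i}-A_x(x_i)\right)$; set $u_i:=u_{x,i}-A_x(x_i)$. The identical computation centered at $y$, now invoking $A_y=A_x$ from Claim~\ref{connect}, gives $g_i(w)=A_x(w)+\left(u_{y,i}-A_x(y_i)\right)$ for $w\in y_i+V_y$. Evaluating both expressions at a common point of the non-empty overlap $(x_i+V_x)\cap (y_i+V_y)$ forces $u_{x,i}-A_x(x_i)=u_{y,i}-A_x(y_i)$, so the two constants coincide and $g_i=A_x+u_i$ holds on the whole union. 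For $f$, the same rewriting turns $f(z+\sum_{i\le n}x_i)=A_x(z)+u_x$ into $f(w)=A_x(w)+u$ on $\sum_{i\le n}x_i+n\cdot V_x$, where $u:=u_x-A_x(\sum_{i\le n}x_i)$. Summing the relations $u_i=u_{x,i}-A_x(x_i)$ over $i$ and using $u_x=\sum_{i\le n}u_{x,i}$ yields simultaneously $u=\sum_{i\le n}u_i$ and, via the $g_i$-identities at $y$, $u=u_y-A_x(\sum_{i\le n}y_i)$; the latter is exactly the constant produced by the $y$-representation of $f$, so $f=A_x+u$ extends across the two $f$-domains as well.

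Uniqueness is immediate: the homomorphism $A_x$ is already pinned down by Claim~\ref{connect}, and since each representing set $(x_i+V_x)$ and $\sum_{i\le n}x_i+n\cdot V_x$ is non-empty, the constants $u_1,\ldots,u_n$ and $u$ are determined by a single value of $g_i$ respectively $f$. The only real subtlety, and the step I expect to carry the weight of the argument, is the matching of the additive constants on the overlaps: it uses both the equality $A_x=A_y$ supplied by Claim~\ref{connect} and the non-emptiness of all the pairwise intersections $(x_i+V_x)\cap (y_i+V_y)$ extracted from the hypothesis.
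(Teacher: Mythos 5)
Your proposal is correct and follows essentially the same route as the paper: both define $u_i:=u_{x,i}-A_x(x_i)$, rewrite the local representations in the form $A_x(\cdot)+u_i$, and derive $u=\sum_{i\le n}u_i$ by summation. The only cosmetic difference is that you match the constants by evaluating the two affine representations at a common point of the non-empty overlap $(x_i+V_x)\cap(y_i+V_y)$, whereas the paper reuses the identity \eqref{eq:constants} already obtained (via the auxiliary point $w$) in the proof of Claim \ref{connect}; the two are equivalent.
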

\begin{proof}
Set $A:=A_x=A_y$. With the same notation of the proof of Claim \ref{connect}, as it follows from Equation \eqref{eq:constants}, we have
$$
u_i:=u_{x,i}-A(x_i)=u_{w,i}-A(w_i)=u_{y,i}-A(y_i)
$$
for each $i=1,\ldots,n$. Hence, for each $z \in x_i+V_x$, it holds
$$
g_i(z)=g_i(x_i+z-x_i)=A(z-x_i)+u_{x,i}=
A(z)+u_i,
$$
and similarly for $z \in y_i+V_y$. 

To conclude, for each $z \in \sum_{i\le n}x_i+n\cdot V_x$, there exist $v_1,\ldots,v_n \in V_x$ such that $z=\sum_{i\le n}(x_i+v_i)$, hence
\begin{displaymath}
\begin{split}
\textstyle f(z)&\textstyle=f\left(\sum_{i\le n}(x_i+v_i)\right)=\sum_{i\le n}g_i(x_i+v_i)=\sum_{i\le n}(A(v_i)+u_{x,i})\\
&\textstyle =\sum_{i\le n}(A(v_i)+A(x_i)+u_{i})=A\left(\sum_{i\le n}(x_i+v_i)\right)+\sum_{i\le n}u_i=A(z)+u,
\end{split}
\end{displaymath}
and similarly for $z \in \sum_{i\le n}y_i+n\cdot V_y$.
\end{proof}

\begin{claim}\label{cuhd}
Fix $x,y \in U$. Then there exists a finite sequence $t_0,t_1,\ldots,t_k \in U$ such that $t_0=x$, $t_k=y$, and $(t_{j}+V_{t_j}^n) \cap (t_{j+1}+V_{t_{j+1}}^n) \neq \emptyset$ for each $j=0,1,\ldots,k-1$.
\end{claim}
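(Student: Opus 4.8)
The plan is to exploit the connectedness of $U$ through a standard chaining argument. First I would define a relation $\sim$ on $U$ by declaring $x \sim y$ whenever there exists a finite sequence $t_0,\ldots,t_k \in U$ with $t_0=x$, $t_k=y$, and $(t_j+V_{t_j}^n)\cap(t_{j+1}+V_{t_{j+1}}^n)\neq\emptyset$ for each $j=0,\ldots,k-1$. Since this is precisely the conclusion we are after, it suffices to prove that $\sim$ has a single equivalence class. Reflexivity is immediate (take $k=0$), symmetry follows by reversing a chain, and transitivity by concatenating two chains at their common endpoint; hence $\sim$ is an equivalence relation, and its classes partition $U$.

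Next I would show that every equivalence class is open in $U$. The key point is that, since each $V_x$ is an open neighborhood of $0 \in X$ (as arranged in the construction preceding Claim \ref{connect}), the product $V_x^n$ is an open neighborhood of $0 \in X^n$, so $x+V_x^n$ is an open neighborhood of $x$ in $X^n$. Given $x \in U$ and any $y \in (x+V_x^n)\cap U$, the point $y$ lies in $x+V_x^n$ and also in $y+V_y^n$ (because $0 \in V_y$), so $(x+V_x^n)\cap(y+V_y^n)$ contains $y$ and is therefore non-empty; thus $x \sim y$ via the one-step chain $x,y$. Consequently the open neighborhood $(x+V_x^n)\cap U$ of $x$ is contained in the class of $x$, and as $x$ ranges over its own class this exhibits each class as a union of relatively open sets, hence open in $U$.

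Finally I would invoke connectedness: the classes of $\sim$ form a partition of $U$ into pairwise disjoint open sets. If there were more than one class, then $U$ would split as the disjoint union of two non-empty open sets (one class versus the union of the rest), contradicting the connectedness of $U$. Hence there is exactly one class, so any $x,y \in U$ satisfy $x \sim y$, which is exactly the assertion of the claim.

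I do not expect a genuine obstacle here, since this is the routine lemma that chain-connectedness along an open cover coincides with connectedness. The only step demanding a little care is verifying that each class is open, and that rests entirely on two facts built into the earlier construction: that $V_x$ is a neighborhood of $0$, so $x+V_x^n$ is truly a neighborhood of $x$, and that $0 \in V_y$ for the overlapping point $y$, which supplies the one-step chain.
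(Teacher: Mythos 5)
Your argument is correct and is essentially the same as the paper's: the paper simply observes that $\{x+V_x^n : x\in U\}$ is an open cover of the connected set $U$ and cites a known lemma for the chaining property, whereas you prove that standard lemma from scratch via the equivalence-relation/openness-of-classes argument. No gap.
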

\begin{proof}
Note that $\{x+V_x^n: x \in U\}$ is a family of open sets in $U$ such that $\bigcup_{x \in U}(x+V_x^n)=U$. Then, the claim follows by \cite[Lemma 2.4]{MR3648479}.
\end{proof}

Putting together Claims \ref{connect}, \ref{intermediate}, and \ref{cuhd}, we conclude the proof.

\section{Proof of Theorem \ref{thm:jensmain}}\label{sec:proofjens}

First, let us assume that $f: K\to Y$ is a function satisfying \eqref{eq:hypojens}. For each $i=1,\ldots,n$, define the function $g_i: K\to Y$ by $g_i(x)=\beta_i f(x/\alpha_i)$ for all $x \in K$. It follows that 
\begin{equation}\label{eq:additivepexider1}
\textstyle \forall x_1,\ldots,x_n\in K,\,\,\,\,\,\,f\left(\sum_{i\le n}x_i\right)=\sum_{i\le n}g_i(x_i).
\end{equation}

Setting $U:=\prod_{i\le n}\alpha_iK$, it is readily seen that $U$ is a non-empty open connected subset of $X^n$ (with the usual product topology). Moreover, by construction $U_i=\alpha_i K$ for all $i=1,\ldots,n$ and $U_+=\sum_{i\le n}\alpha_i K$. 
Thanks to \eqref{eq:additivepexider1} and the hypothesis $\sum_{i\le n}\alpha_i K\subseteq K$, we obtain 
$$
\textstyle \forall x \in U,\,\,\,\,\,\,f\left(\sum_{i\le n}x_i\right)=\sum_{i\le n}g_i(x_i),
$$
hence by Theorem \ref{thm:rado} there exist a unique group homomorphism $A: X \to Y$ (hence, $A$ is $\mathbb{Q}$-linear) and unique $u,u_1,\ldots,u_n\in Y$ such that
\begin{equation}\label{eq:system1}
\forall x \in K,\forall i=1,\ldots,n,\,\,\,\,\,f(x)=A(x)+u,\,\,\,g_i(x)=A(x)+u_i,
\end{equation}
with $u=\sum_{i\le n}u_i$.

At this point, we claim that $A$ satisfies \eqref{eq:homogeneityA}. 
To this aim, fix $i \in \{1,\ldots,n\}$. 
Taking into account \eqref{eq:system1} and the definition of $g_i$, we get
\begin{equation}\label{eq:system233}
\forall x \in K,\,\,\,\,\,A(x)+u=f(x)=\frac{g_i(\alpha_i x)}{\beta_i}=\frac{A(\alpha_i x)+u_i}{\beta_i}.
\end{equation}
However, since $A$ is $\mathbb{Q}$-linear, then also
$$
\forall x \in K,\,\,\,\,\, 2A(x)+u=\frac{2A(\alpha_i x)+u_i}{\beta_i}.
$$
Calculating the differences of the above equations, we obtain that $A(\alpha_i x)=\beta_i A(x)$ for all $x \in K$. To conclude, fix $x \in X$, $y \in K$, and let $V$ be a neighborhood of $0$ such that $y+V \subseteq K$ (which exists since $K$ is open). It follows by \cite[Theorem 1.15.a]{MR1157815} that there exists $n \in \mathbb{N}$ such that $x \in nV$. Therefore, considering that $A$ is $\mathbb{Q}$-linear, we obtain 
$$
A(\alpha_i y)+\frac{A\left(\alpha_i x\right)}{n}=A\left(\alpha_i\left(y+\frac{x}{n}\right)\right)=\beta_i A\left(y+\frac{x}{n}\right)=\beta_i A(y)+\frac{\beta_i A(x)}{n},
$$
so that $A(\alpha_i x)=\beta_i A(x)$ for all $x \in X$.

Note that, thanks to \eqref{eq:system233}, we have $u_i=\beta_i u$ for each $i=1,\ldots,n$. Summing these equations we obtain 
$
u=\sum_{i\le n}u_i=u\sum_{i\le n}\beta_i.
$ 
Considering that $\mathbb{F}$ has characteristic $0$, it follows that $u=u_1=\cdots=u_n=0$ is the unique solution if $\sum_{i\le n}\beta_i \neq 1$; finally, $u$ can be any value in $Y$ if $\sum_{i\le n}\beta_i=1$. This concludes the proof of the first part.

\medskip

Conversely, let us assume that $A: X\to Y$ is a group homomorphism which satisfies \eqref{eq:homogeneityA} and $f: K\to Y$ is a function defined by \eqref{eq:claimjens}. If $\sum_{i\le n}\beta_i\neq 1$ and $b=0$ then $f(x)=A(x)$ 
so that 
$$
\textstyle f\left(\sum_{i\le n}\alpha_ix_i\right)=A\left(\sum_{i\le n}\alpha_ix_i\right)=\sum_{i\le n}\beta_i A(x_i)=\sum_{i\le n}\beta_i f(x_i)
$$
for all $x_1,\ldots,x_n \in K$. On the other hand, if $\sum_{i\le n}\beta_i=1$ and $b \in Y$ then
\begin{displaymath}
\textstyle f\left(\sum_{i\le n}\alpha_ix_i\right)=b+\sum_{i\le n}\beta_i A(x_i)=\sum_{i\le n}\beta_i\left(A(x_i)+b\right)=\sum_{i\le n}\beta_i f(x_i).
\end{displaymath}
Therefore, in both cases, $f$ satisfies \eqref{eq:hypojens}.


\section{Concluding Remark}

In our Theorem \ref{thm:jensmain} one could choose, e.g., $K=(-1,2)$ and fix non-zero reals $\alpha_1,\ldots,\alpha_n$ with $\sum_{i\le n}|\alpha_i|<1/2$ so that, in fact, $\sum_{i\le n}\alpha_i K \subseteq (-1,1)\subseteq K$ (the same works, for instance, if $X=\mathbb{C}$, $K$ is the open circle with center $\nicefrac{1}{2}$ and radius $\nicefrac{3}{2}$, and $\alpha_1,\ldots,\alpha_n$ are non-zero complex numbers such that $\sum_{i\le n}|\alpha_i|<1/2$).

However, 
if $X$ is real topological vector space, 
one may ask whether it would be sufficient to require that $\alpha_1,\ldots,\alpha_n$    
 are \emph{positive} reals. 
More precisely: 
\begin{question}\label{questionfinal}
Let $X$ be a real topological vector space, fix non-zero reals $\alpha_1,\ldots,\alpha_n$, and let $K\subseteq X$ be a non-empty open connected set with $\sum_{i\le n}\alpha_iK\subseteq K$. 
Does there exist a non-empty open connected set $K^\prime\subseteq K$ such that $\sum_{i\le n}|\alpha_i|K^\prime \subseteq K^\prime$? 
\end{question}


We can show that the answer is affirmative if $X=\mathbb{R}$ and $\sum_{i\le n}|\alpha_i| \le 1$. In such case, indeed, it would be sufficient to prove that $K^\prime:= K\cap (-K)$ is a non-empty neighborhood of $0$ so that $\sum_{i\le n}|\alpha_i|K^\prime$ is contained in both $\sum_{i\le n}\alpha_i K\subseteq K$ and $-\sum_{i\le n}\alpha_iK\subseteq -K$, hence also to the intersection $K^\prime$.

To this aim, let us assume that at least one $\alpha_i$ is negative, let us say $\alpha_1,\ldots,\alpha_k<0$ and $\alpha_{k+1},\ldots,\alpha_n>0$, for some positive integer $k\le n$. Define also $\alpha^+:=\sum_{i>k}\alpha_i$ and $\alpha^-:=\sum_{i\le k}\alpha_i$ so that $\alpha^-\neq 0$, $\alpha^+\neq 1$, and $\alpha^++\alpha^-\le \alpha^+-\alpha^-\le 1$.  
Since $K$ is a non-empty open connected set, there exist $a,b \in \mathbb{R}\cup \{\pm \infty\}$, with $a<b$, such that $K=(a,b)$. 
Note that $\alpha^- x+\alpha^+y \in K$ for all $x,y \in K$. Thus, if $a=-\infty$ then, given any $y_0 \in K$ we have $\alpha^- (-n)+\alpha^+y_0 \in K$ for all sufficiently large $n$, which implies $b=\infty$; thus $K=\mathbb{R}$. 
The case $b=\infty$ is similar. 
%

Hence, let us assume hereafter that $a$ and $b$ are finite. Note that $\alpha K=(\alpha a, \alpha b)$ and $-\alpha K=(-\alpha b, -\alpha a)$ for all $\alpha>0$; in addition, $(x,y)+(x^\prime,y^\prime)=(x+x^\prime,y+y^\prime)$ for all non-empty intervals $(x,y)$ and $(x^\prime,y^\prime)$. Therefore
$$
\sum_{i\le n}\alpha_iK=\alpha^+ K+\alpha^-K=(\alpha^+ a+\alpha^- b, \alpha^+ b+\alpha^- a).
$$
Considering that $\sum_{i\le n}\alpha_i K \subseteq K$, we obtain that $\alpha^+ a+\alpha^- b \ge a$ and $\alpha^+ b+\alpha^- a\le b$, which can be rewritten as 
\begin{equation}\label{eq:finalsysteminequalities}
(1-\alpha^+)a-\alpha^- b \le 0\,\,\,\,\,\text{ and }\,\,\,\,\,\, (1-\alpha^+) b-\alpha^- a\ge 0.
\end{equation}
If $\alpha^+-\alpha^-=1$ then, by \eqref{eq:finalsysteminequalities}, $-\alpha^-(a+b)=0$, so that $a\le 0$ and $b=-a\ge 0$ (and they cannot be equal since $(a,b)\neq \emptyset$). 
Otherwise $\alpha^+-\alpha^-<1$ and, in particular, $\alpha^++\alpha^-<1$. 
Multiplying the second equation 
in \eqref{eq:finalsysteminequalities} 
by $\frac{1-\alpha^+}{\alpha^-}<0$ and summing it to the first one, we obtain
$$
\left(-\alpha^-+\frac{(1-\alpha^+)^2}{\alpha^-}\right)b=
\frac{(1-\alpha^++\alpha^-)(1-\alpha^+-\alpha^-)}{\alpha^-}\,b\le 0.
$$
This implies that $b\ge 0$ and, similarly, $a\le 0$. 

To conclude, we claim that $0 \in (a,b)$. 
Let us assume for the sake of contradiction that $a=0$. Then, choosing $x_1=\cdots=x_k=b-\varepsilon \in K$ and $x_{k+1}=\cdots=x_n=\varepsilon \in K$ with $\varepsilon>0$ sufficiently small, 
we obtain that $\sum_{i\le n}\alpha x_i=\alpha^-(b-\varepsilon)+\alpha^+\varepsilon>0$, which is impossible. The case $b=0$ is similar.

\subsection{Acknowledgments}

The authors are grateful to an anonymous referee for suggestions that helped improving the overall presentation of the article.

\bibliographystyle{amsplain}
\bibliography{functional}

\end{document}